\theoremstyle{plain}
\newtheorem{thm}{Theorem}[section]
\newtheorem{cor}{Corollary}[section]
\theoremstyle{proof}
\numberwithin{equation}{section}
\begin{document} 
\title[Quadratic fields with $n$-rank at least 2]{Parameterized families of quadratic fields with $n$-rank at least $2$}
\author{Azizul Hoque and Srinivas Kotyada }
\keywords{Quadratic field, Class group, $n$-rank}
\subjclass[2010] {Primary: 11R29, Secondary: 11R11}
\date{\today}
\maketitle

\begin{abstract}
We construct parameterized families of imaginary (resp. real) quadratic fields whose class groups have $n$-rank at least $2$. 
\end{abstract}

\section{Introduction}
It is well-known that there exist infinitely many real and imaginary quadratic fields each with class number divisible by a given positive integer. In particular, Nagell \cite{N22} (resp. Yamamoto \cite{Y70} and Weinberger \cite{W73}) proved that there are infinitely many imaginary (resp. real) quadratic fields each with class number divisible by an integer $n\geq 2$. Numerous results on parametric families of imaginary (resp. real) quadratic fields each with class number divisible by $n\geq 2$ are also proved by many authors (cf. \cite{CM03, CHKP18, CH20, CH23, H21, H22, Y02}).  
 A closely related problem concerns the $n$-rank of class groups of quadratic fields (in fact, any number fields). This problem has attracted quite a bit of attention. Here by a $n$-rank of the class group $G$ of a number field, we mean the largest positive integer $n$ for which $G$ contains a subgroup $H$ such that $H\cong\left(\mathbb{Z}/p\mathbb{Z}\right)^r$. Clearly if $G$ has an element of order $n$ then the $n$-rank of $G$ is at least $1$. Under the assumption that the class groups of quadratic fields have $n$-rank at least $2$, Cohen and Lenstra \cite{CL84} predict that the existence of a positive proportion of such fields. On the other hand, Yamamoto \cite{Y70} proved the existence of infinitely many imaginary quadratic fields  whose class groups have $n$-rank at least $2$ for any integer odd integer $n$.  Applying this result of Yamamoto,  Yu \cite{Y02} gave a parametric family of imaginary quadratic fields whose class group has $n$-rank at least  $2$ for some odd integer $n\geq 3$. 
Not much is known about the quadratic fields with higher rank class groups. In 1973, Craig \cite{C73} proved the existence of infinitely many imaginary quadratic fields with class groups having $3$-rank at least $4$. On the other hand, Llorente and Quer \cite{LQ88, Q87} gave three imaginary quadratic fields with class groups having $3$-rank $6$. It was Kishi and Komatsu \cite{KK17}, who gave a parametric family of imaginary quadratic fields with class groups having $3$-rank at least $3$. However, the fields in this family is very sparse since the discriminants of these fields grow exponentially. It seems that there is no parametric family of imaginary quadratic fields with class groups having $n$-rank at least $2$, for arbitrary positive integer $n$, is not known. On the other hand, very less is known about the $n$-rank of the class groups of real quadratic fields except for $n=3, 5,7$ which are known due to Crain \cite{C77} and Mestre \cite{M80, M81}. 

The goal of this article is to give a simply parameterized family of imaginary quadratic fields whose class groups have $n$-rank at least $2$ (see, Theorem \ref{thmi}) for arbitrary positive integer $n$. We also give an analogous family in case of real quadratic fields (see, Theorem \ref{thmr}).    


\section{Families of imaginary quadratic fields: $n$-rank}
Let $a, b, c$ and $n$ be any positive integers. Assume that 
\begin{equation}\label{d}
d:=d(a,b,c,n)=2(a^n+b^n)c^n-\left(\sum_{i=1}^{n}a^{n-i}b^{i-1}\right)^2-(a-b)^2c^{2n}.
\end{equation}
We can choose $a, b, c$ in such a way that $d$ is square-free and positive. Here, our goal is to show that 
the class group of $\mathbb{Q}(\sqrt{-d})$ contains a subgroup which is isomorphic to $\mathbb{Z}/n\mathbb{Z}\times \mathbb{Z}/n\mathbb{Z}$. More precisely, we prove the following:
\begin{thm}\label{thmi}
Let $a, b, c$ and $n$ be any positive integers such that $a\ne b$, and $d$ as defined by \eqref{d} is positive and square-free.
Assume that $d\geq 4(abc^2)^{\ell}$, where $\ell$ is the large prime divisor of $n$. Then the class group of $\mathbb{Q}(\sqrt{-d})$ has $n$-rank at least $2$. 
\end{thm}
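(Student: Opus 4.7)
The plan is to exhibit two ideals $\mathfrak A,\mathfrak B\subset\mathcal O_K$, where $K=\mathbb Q(\sqrt{-d})$, each of order exactly $n$ in $\mathrm{Cl}(K)$ and with classes generating a subgroup isomorphic to $\mathbb Z/n\mathbb Z\times\mathbb Z/n\mathbb Z$. The first step is the hidden algebraic identity behind \eqref{d}. Put $S:=\sum_{i=1}^{n}a^{n-i}b^{i-1}=(a^n-b^n)/(a-b)$ and set
\[
\alpha:=\bigl(S-(a-b)c^n\bigr)+\sqrt{-d},\qquad \beta:=\bigl(S+(a-b)c^n\bigr)+\sqrt{-d}.
\]
Expanding $(S\pm(a-b)c^n)^2+d$, using \eqref{d} to cancel the $S^2$ and $(a-b)^2c^{2n}$ terms, and invoking the telescoping $(a-b)S=a^n-b^n$, one gets $\alpha\bar\alpha=4\,b^nc^n$ and $\beta\bar\beta=4\,a^nc^n$.

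Next I would verify that every odd prime divisor of $abc$ splits in $\mathcal O_K$. For $p\mid b$ with $\gcd(p,a)=1$, reduction of \eqref{d} modulo $p$ yields $d\equiv-(a^{n-1}-ac^n)^2\pmod p$, so $-d$ is a square mod~$p$ and $p$ splits; analogous reductions handle $p\mid a$ and $p\mid c$. Writing each split $(p)=\mathfrak p\bar{\mathfrak p}$, a residue calculation places $\alpha$ in exactly one of $\mathfrak p,\bar{\mathfrak p}$ when $p$ is odd, and the norm identity $v_{\mathfrak p}(\alpha)+v_{\bar{\mathfrak p}}(\alpha)=n\,v_p(bc)$ then pins the valuation at the chosen prime to $n\,v_p(bc)$. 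Collecting these primes yields an ideal $\mathfrak B$ of norm $bc$ such that $(\alpha)=(2)\cdot\mathfrak B^{n}$, after a short $d\bmod 8$ parity check absorbs the factor $4$ in $N(\alpha)$ into the principal ideal $(2)$. Hence $\mathfrak B^{n}$ is principal, and the same recipe applied to $\beta$ produces an ideal $\mathfrak A$ of norm $ac$ with $\mathfrak A^{n}$ principal.

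It remains to rule out any non-trivial relation $\mathfrak A^{i}\mathfrak B^{j}=(\gamma)$ with $0\le i,j<n$, $(i,j)\ne(0,0)$. Such a relation gives $N(\gamma)=a^{i}b^{j}c^{i+j}\le(abc^{2})^{\max(i,j)}$; writing $\gamma=u+v\sqrt{-d}$ (or $(u+v\sqrt{-d})/2$ if $d\equiv 3\pmod 4$) yields $u^{2}+dv^{2}\le 4(abc^{2})^{\max(i,j)}$. The case $v=0$ would force $\mathfrak A^{i}\mathfrak B^{j}=\overline{\mathfrak A}^{i}\overline{\mathfrak B}^{j}$; comparing valuations at the split primes constituting $\mathfrak A$ and $\mathfrak B$ (and noting that $\alpha,\beta$ both lie in the same prime above each $p\mid c$) forces $i=j=0$. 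If $v\ne 0$, then $d\le 4(abc^{2})^{\max(i,j)}$; reducing via the structure theorem to $\mathbb F_p$-independence of $[\mathfrak A]^{n/p},[\mathfrak B]^{n/p}$ in $\mathrm{Cl}(K)[p]$ for each prime $p\mid n$, one confronts the hypothesis $d\ge 4(abc^{2})^{\ell}$ (with $\ell$ the largest prime divisor of $n$), which yields the needed contradiction on the relevant exponent pairs.

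The crux is the last step: matching the single numerical hypothesis against every potential Diophantine collapse $i[\mathfrak A]+j[\mathfrak B]=0$ in $\mathrm{Cl}(K)$, and keeping the $2$-primary contribution from $N(\alpha)=4b^nc^n$ under control, is where the real bookkeeping lies. The algebraic identities of the first two paragraphs cleanly produce candidate ideals of order dividing~$n$; the arithmetic/geometric work of verifying the exact order and the $\mathbb Z/n$-independence of $[\mathfrak A],[\mathfrak B]$ is where the proof will spend most of its effort.
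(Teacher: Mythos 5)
Your opening identities are correct: with $S=(a^n-b^n)/(a-b)$ one indeed gets $N\bigl(S\pm(a-b)c^n+\sqrt{-d}\bigr)=4a^nc^n$ and $4b^nc^n$, which is exactly the paper's starting point ($x_1=S+(a-b)c^n$, $x_2=S-(a-b)c^n$, $x_i^2-4y_i^n=-d$ with $y_1=ac$, $y_2=bc$). Your construction of $\mathfrak A,\mathfrak B$ via splitting of the primes dividing $abc$ also works, but it is more labour than needed: since $d$ is square-free, $\gcd(x_i,y_i)=1$, so the two conjugate ideals $\bigl(\tfrac{x_i+\sqrt{-d}}{2}\bigr)$ and $\bigl(\tfrac{x_i-\sqrt{-d}}{2}\bigr)$ are comaximal and each is automatically an $n$-th ideal power; no prime-by-prime valuation bookkeeping (nor any $d\bmod 8$ discussion) is required, because $\tfrac{x_i+\sqrt{-d}}{2}$ is already an algebraic integer with norm $y_i^n$.

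The genuine gap is in your last step, and it is the crux of the theorem. The only quantitative input available is $d\ge 4(abc^2)^{\ell}$, which rules out a relation $\mathfrak A^{i}\mathfrak B^{\pm j}=(\gamma)$ with $\gamma=\tfrac{u+v\sqrt{-d}}{2}$, $v\ne 0$, precisely when the norm $a^{i}b^{j}c^{i+j}$ is smaller than $d/4$, i.e.\ essentially when $\max(i,j)\le\ell$. Your own reduction — $\mathbb F_p$-independence of $[\mathfrak A]^{n/p}$ and $[\mathfrak B]^{n/p}$ for each prime $p\mid n$ — forces you to exclude relations with exponents of size $in/p,\,jn/p$ ranging up to $(p-1)n/p$, whose norms are of order $(abc^2)^{n(p-1)/p}$ and are not controlled by $(abc^2)^{\ell}$ at all; the ``needed contradiction on the relevant exponent pairs'' that you invoke simply is not delivered by the hypothesis. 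The paper sidesteps this by testing only the exponent pairs $(p,0)$, $(0,q)$, $(p,q)$, $(p,-q)$ with $p,q$ prime divisors of $n$ (so all exponents are at most $\ell$, which is exactly what $d\ge 4(abc^2)^{\ell}$ is calibrated for), and it obtains $v\ne 0$ by raising the putative generator to the power $n/p$, comparing with the explicit generator $\tfrac{x_1+\sqrt{-d}}{2}$ of $\mathfrak a^{n}$, and equating coefficients of $\sqrt{-d}$ — an argument you would also need to supply, since without it the case $v=0$ is not excluded by norms alone. To repair your write-up you must either carry out the paper's small-exponent reduction (and justify why those four families of non-principality statements suffice for $\langle[\mathfrak A],[\mathfrak B]\rangle\cong\mathbb Z/n\mathbb Z\times\mathbb Z/n\mathbb Z$), or else strengthen the size hypothesis on $d$ so that it dominates the norms occurring in your $n/p$-power reduction.
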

We note that if $n$ itself is a prime then the assumption $d>4(abc^2)^\ell$ can be reduced to $d>4abc^2$ which allows more members in the family. Again if the positive integers $a$ and $b$ are equal, but other than $1$, then the $d$ is not square-free and thus the proof of Theorem \ref{thmi} does not work. In fact, in this case it is not easy to show the $n$-rank of the corresponding class group is at least $2$ even if it exists. On the other hand, it is easy to see that the $n$-rank of the class group of $\mathbb{Q}(\sqrt{-d})$ is at least one when $a=b=1$. In this case, the proof follows from that of Theorem \ref{thmi}. More precisely, we have the following:
\begin{cor}\label{cor1}
For any two positive integers $c$ and $n$, the $n$-rank of the class group of the imaginary quadratic field $\mathbb{Q}(\sqrt{n^2-4c^n})$ is at least one provided $4c^n-n^2>c^\ell$ and is square-free.
\end{cor}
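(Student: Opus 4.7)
This is the degenerate case $a=b=1$ of Theorem~\ref{thmi}, in which the two classes that witnessed $n$-rank $\geq 2$ collapse into a single class. The goal is to show that this single class still has order exactly $n$, so that the class group contains a cyclic subgroup isomorphic to $\mathbb Z/n\mathbb Z$, forcing $n$-rank at least $1$.

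Concretely, setting $a=b=1$ in \eqref{d} yields $d=4c^{n}-n^{2}$, so that $\beta:=(n+\sqrt{-d})/2$ has norm $\beta\bar\beta=c^{n}$. Squarefreeness of $d$ forces $n$ odd (otherwise $4\mid d$), so $d\equiv 3\pmod 4$ and $\beta\in\mathcal O_{K}$ with $K:=\mathbb Q(\sqrt{-d})$; squarefreeness also forces $\gcd(n,c)=1$, since a common prime divisor would contribute a square to $4c^{n}-n^{2}$. Hence $(\beta)$ and $(\bar\beta)$ are coprime in $\mathcal O_{K}$, and combining this with $(\beta)(\bar\beta)=(c)^{n}$ shows that each rational prime $p\mid c$ is split or ramified in $K$ and that every prime ideal exponent in $(\beta)$ is divisible by $n$. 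This produces an integral ideal $\mathfrak a$ with $N(\mathfrak a)=c$ and $(\beta)=\mathfrak a^{n}$, exactly as in the opening of the proof of Theorem~\ref{thmi}.

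It then remains to show that the class $[\mathfrak a]$ has order exactly $n$. Suppose instead that $\mathfrak a^{k}=(\gamma)$ for some proper divisor $k$ of $n$; since $\mathcal O_{K}^{\times}=\{\pm 1\}$ (for $d>3$), this forces $\beta=\pm\gamma^{n/k}$, and writing $\gamma=(x+y\sqrt{-d})/2$ with $x,y\in\mathbb Z$ of the same parity yields $x^{2}+dy^{2}=4c^{k}$. A binomial expansion of $\gamma^{n/k}$ compared termwise with $\beta$ shows that the imaginary part of $\gamma^{n/k}$, which is divisible by $y$, must be nonzero; hence $y\neq 0$ and $d\leq 4c^{k}$. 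The hypothesis $d>c^{\ell}$ is arranged precisely to contradict this bound for the appropriate prime divisor $\ell$ of $n$ and every proper divisor $k$.

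The principal obstacle is this final step, where the comparatively weak hypothesis $d>c^{\ell}$ of the corollary (versus $d\geq 4(c^{2})^{\ell}$ in Theorem~\ref{thmi}) must be leveraged carefully. I expect the argument to reduce, via a term-by-term analysis of the binomial expansion, to an elementary estimate that handles all proper divisors $k$ uniformly once $\ell$ is interpreted as the smallest prime divisor of $n$, so that $n/\ell$ is the largest proper divisor and the hardest case to exclude.
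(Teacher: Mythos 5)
Your construction of $\mathfrak{a}$ --- the specialization $a=b=1$ giving $d=4c^{n}-n^{2}$, the observations that squarefreeness forces $n$ odd and $\gcd(n,c)=1$, the coprimality of $(\beta)$ and $(\bar\beta)$ for $\beta=(n+\sqrt{-d})/2$, and the conclusion $(\beta)=\mathfrak{a}^{n}$ with $N(\mathfrak{a})=c$ --- is exactly the paper's route (the paper's entire proof is the remark ``put $x=n$, $y=c$ and rerun the proof of Theorem~\ref{thmi}''), and this half is fine.

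The gap is in the final step, where you diverge from the paper. To exclude a proper divisor $k$ of $n$ you derive $d\le 4c^{k}-x^{2}<4c^{k}$ from $\mathfrak a^{k}=(\gamma)$, and to conclude that $\mathrm{Cl}(\mathfrak a)$ has order $n$ you must do this for every $k=n/p$ with $p$ a prime divisor of $n$; the worst case is $k=n/p_{\min}$, so your route needs $d>4c^{\,n/p_{\min}}$. Neither the paper's hypothesis $d>c^{\ell}$ with $\ell$ the \emph{largest} prime divisor of $n$, nor your proposed reinterpretation of $\ell$ as the smallest one, delivers this: for $n=8$ both readings give the hypothesis $d>c^{2}$, while excluding $k=4$ needs $d>4c^{4}$. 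The device you are missing is the one used in the proof of Theorem~\ref{thmi}: assume instead that $\mathfrak a^{p}=(\alpha)$ for a \emph{prime} $p\mid n$, with $\alpha=(u+v\sqrt{-d})/2$, and raise to the power $n/p$, so that $(\alpha^{n/p})=\mathfrak a^{n}=(\beta)$ and hence $\alpha^{n/p}=\pm\beta$. Then $v\neq 0$ (else $\alpha^{n/p}$ would be rational), while taking norms gives $N(\alpha)=c^{p}$ --- the exponent is now a \emph{prime} divisor of $n$, bounded by $\ell$, and $(u^{2}+dv^{2})/4=c^{p}\le c^{\ell}$ contradicts $d$ being large compared with $c^{\ell}$. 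This is why the hypothesis is calibrated to $c^{\ell}$ with $\ell$ the largest prime divisor. Two caveats that concern the paper rather than your write-up: the inequality actually needed is $u^{2}+dv^{2}>4c^{\ell}$, so the stated bound $d>c^{\ell}$ is off by a factor of $4$; and ``$\mathfrak a^{p}$ non-principal for every prime $p\mid n$'' pins the order of $\mathrm{Cl}(\mathfrak a)$ down to $n$ only when $n$ has at most two prime factors counted with multiplicity (for $n=p^{2}q$ the order could still be $pq$). Your ``all proper divisors'' target is the logically correct one, but it cannot be reached from the hypothesis as stated.
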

One can put $x=n$ and $y=c$, and then apply the arguments that used in the proof of Theorem \ref{thmi} to get this corollary. Theorem \ref{thmi} extends \cite[Lemma 2.1]{Y20}, which is one of the main results of that paper.  

\begin{proof}[Proof of Theorem \ref{thmi}]
We first assume that 
\begin{equation*}
x_1=\sum_{i=1}^{n}a^{n-i}b^{i-1}+(a-b)c^n\text{ and } x_1=ac.
\end{equation*}
Then 
\begin{equation}\label{x1}
x_1^2-4y_1^n=-d.
\end{equation}
Similarly we put  $$x_2=\sum_{i=1}^{n}a^{n-i}b^{i-1}+(b-a)c^n\text{ and } y_2=bc,$$
to get
\begin{equation}\label{x2}
x_2^2-4y_2^n=-d.
\end{equation}
Now \eqref{x1} can be written as follows:
\begin{equation}\label{y1n}
\left(\frac{x_1+\sqrt{-d}}{2}\right) \left(\frac{x_1-\sqrt{-d}}{2}\right)=y_1^n.
\end{equation}
This shows that $x_1, y_1\in \left(\frac{x_1+\sqrt{-d}}{2}, \frac{x_1-\sqrt{-d}}{2}\right)$. Let $p$ be a prime such that  $p\mid \gcd(x_1, y_1)$, then by \eqref{x1}, $p^2\mid d$ which contradicts the fact that $d$ is square-free. Thus $\gcd(x_1, y_1)=1$ and hence $1\in \left(\frac{x_1+\sqrt{-d}}{2}, \frac{x_1-\sqrt{-D}}{2}\right)$ which further implies that $\left(\frac{x_1+\sqrt{-d}}{2}, \frac{x_1-\sqrt{-d}}{2}\right)=\mathcal{O}_d$, the ring of integers of $\mathbb{Q}(\sqrt{-d})$. In other words, the ideals $\left(\frac{x_1+\sqrt{-d}}{2}\right) $ and $\left( \frac{x_1-\sqrt{-d}}{2}\right)$ are coprime, and thus using \eqref{y1n} we can write 
\begin{equation}\label{1n}
\left(\frac{x_1+\sqrt{-d}}{2}\right)=\mathfrak{a}^n,~~\left(\frac{x_1-\sqrt{-d}}{2}\right)=\bar{\mathfrak{a}}^n
\end{equation}
for some ideal $\mathfrak{a}$ and its conjugate $\bar{\mathfrak{a}}$.

In the similar fashion, using \eqref{x2} one can find an ideal $\mathfrak{b}$ and its conjugate $\bar{\mathfrak{b}}$  such that 
\begin{equation}\label{2n}
\left(\frac{x_2+\sqrt{-d}}{2}\right)=\mathfrak{b}^n, ~~ \left(\frac{x_2-\sqrt{-d}}{2}\right)=\bar{\mathfrak{b}}^n.
\end{equation}
Let $\text{Cl}(\mathfrak{a})$ and Cl$(\mathfrak{b})$ be the ideal classes in $\mathbb{Q}(\sqrt{-d})$ containing $\mathfrak{a}$ and $\mathfrak{b}$ respectively. We will show that $\text{Cl}(\mathfrak{a})$ and Cl$(\mathfrak{b})$ generate a subgroup of the class group of $\mathbb{Q}(\sqrt{-d})$ which is isomorphic to $\mathbb{Z}/n\mathbb{Z}\times \mathbb{Z}/n\mathbb{Z}$. For this, it is sufficient to show that  $\text{Ord}\left(\text{Cl}(\mathfrak{a})\right)=\text{Ord}\left(\text{Cl}(\mathfrak{b})\right)=n$, and $\text{Cl}(\mathfrak{a})$ and $\text{Cl}(\mathfrak{b})$ are distinct. In other words, it is sufficient to show that none of $ \mathfrak{a}^p$, $\mathfrak{b}^q $, $\mathfrak{a}^p{\mathfrak{b}}^q$  and $\mathfrak{a}^p\bar{\mathfrak{b}}^q$ is principal for any prime divisors $p$ and $q$ of $n$.

We first show that $\mathfrak{a}^p$ is not principal for any prime divisor $p$ of $n$. On the contrary, we
assume that $$\mathfrak{a}^p=\left(\frac{u+v\sqrt{-d}}{2}\right)$$ for some $u, v\in \mathbb{Z}$ with $u\equiv v\pmod 2$. This can be rewritten as 
$$\mathfrak{a}^n=\left(\frac{u+v\sqrt{-d}}{2}\right)^{n/p}.$$
Applying \eqref{y1n} and then equating the imaginary parts, we see that $v\mid 1$, and thus $v \ne 0$. Taking norm on both sides of the last equation, we get 
$$y_1^n=\left(\frac{u^2+dv^2}{4}\right)^{n/p}\geq \left(\frac{u^2+d}{4}\right)^{n/p}.
$$ Since
$d \geq (4abc^2)^{\ell}$, so that $y_1^n> (abc^2)^{\ell n/p}>(ac)^{\ell n/p}=y_1^{\ell n/p}$, which is not possible. Thus $\mathfrak{a}^p$ is not principal. Similarly, one can show that 
$\mathfrak{b}^q$ is not principal for any prime divisor $q$ of $n$.  

We now claim that $ \mathfrak{a}^p\mathfrak{b}^q $ is not principal for any prime divisors $p$ and $q$ of $n$. If not then, we can find some $u, v\in \mathbb{Z}$ with same parity such that 
\begin{equation}\label{pq}
\mathfrak{a}^p\mathfrak{b}^q=\left(\frac{u+v\sqrt{-d}}{2}\right). 
\end{equation}
Using \eqref{1n} and \eqref{2n}, we have
$$\mathfrak{a}^{pn}\mathfrak{b}^{qn}=\frac{1}{2^{pq}}\left( \sum_{r=0}^{\frac{p-1}{2}} \sum_{s=0}^{\frac{q-1}{2}}A_{r,s}+B_{r,s}\sqrt{-d}
\right),$$
where 
\begin{align*}
A_{r,s}&=\left(\binom{p}{2r} \binom{q}{2s} x_1^{p-2r}x_2^{q-2s}-\binom{p}{2r+1} \binom{q}{2s+1} x_1^{p-2r-1}x_2^{q-2s-1}d\right)(-d)^{r+s}\\
B_{r,s}&=\left(\binom{p}{2r} \binom{q}{2s+1} x_1^{p-2r}x_2^{q-2s-1}+\binom{p}{2r+1} \binom{q}{2s} x_1^{p-2r-1}x_2^{q-2s}\right)(-d)^{r+s}.
\end{align*} 
Therefore as $\pm 1$ are the only units in $\mathbb{Q}(\sqrt{-d})$ (using \eqref{pq}), $v\ne 0$. Again writing \eqref{pq} in norm form, we get 
$$y_1^{pn}y_2^{qn}=\left(\frac{u^2+v^2d}{4}\right)^n\geq \left(\frac{u^2+d}{4}\right)^n.$$
Since $d\geq 4(abc^2)^{\ell}$, so that $y_1^py_2^q> (abc^2)^{\ell }=y_1^{\ell }y_2^{\ell }.$
This is not possible and hence  $\mathfrak{a}^p\mathfrak{b}^q $ is not principal. Similarly, one can show that $\mathfrak{a}^p\bar{\mathfrak{b}}^q$ is not principal. This completes the proof. 
\end{proof}
\section{Families of real quadratic fields: $n$-rank}
Let $a, b$ and $c$ be any positive integers. For positive integer  $n$, we define 
\begin{equation}\label{rd}
\Delta:=\Delta(a,b,c,n)=\left(\sum_{i=1}^{n}a^{n-i}b^{i-1}\right)^2+(a-b)^2c^{2n}-2(a^n+b^n)c^n.
\end{equation}
We will show that if $d$ is square-free, positive and sufficiently large, then the class group of $\mathbb{Q}(\sqrt{\Delta})$ contains  $\mathbb{Z}/n\mathbb{Z}\times \mathbb{Z}/n\mathbb{Z}$ as a subgroup.  Precisely we prove:
\begin{thm}\label{thmr}
Let $a, b$  and $c$ be any distinct positive integers. For any positive integer $n\equiv 0\pmod 4$, let $d$ be as defined by \eqref{rd}.  Assume that $d$ is square-free and $\geq 16(abc^2)^{2\ell}$, where $\ell$ is the large prime divisor of $n$. Then the class group of $\mathbb{Q}(\sqrt{\Delta})$ has $n$-rank at least $2$. 
\end{thm}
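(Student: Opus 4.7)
The plan is to follow the strategy of Theorem \ref{thmi} as closely as possible, the main new obstacle being the (infinite) unit group of the real quadratic field $K:=\mathbb{Q}(\sqrt{\Delta})$. First set
$$x_{1}=S+(a-b)c^{n},\ y_{1}=ac,\qquad x_{2}=S+(b-a)c^{n},\ y_{2}=bc,$$
where $S=\sum_{i=1}^{n}a^{n-i}b^{i-1}$. A direct expansion (using $(a-b)S=a^{n}-b^{n}$) gives $x_{j}^{2}-4y_{j}^{n}=\Delta$ for $j=1,2$, so that $\left(\tfrac{x_{j}+\sqrt{\Delta}}{2}\right)\left(\tfrac{x_{j}-\sqrt{\Delta}}{2}\right)=(y_{j}^{n})$ in $\mathcal{O}_{K}$. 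Repeating the coprimality argument from the proof of Theorem \ref{thmi} (which relies on $\Delta$ being square-free) yields $\left(\tfrac{x_{1}+\sqrt{\Delta}}{2}\right)=\mathfrak{a}^{n}$ and $\left(\tfrac{x_{2}+\sqrt{\Delta}}{2}\right)=\mathfrak{b}^{n}$ for ideals $\mathfrak{a},\mathfrak{b}\subset\mathcal{O}_{K}$, with the conjugate factors producing $\bar{\mathfrak{a}}^{n},\bar{\mathfrak{b}}^{n}$.

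As in the imaginary case, to see that $[\mathfrak{a}]$ and $[\mathfrak{b}]$ generate a subgroup of $\mathrm{Cl}(K)$ isomorphic to $(\mathbb{Z}/n\mathbb{Z})^{2}$ it is enough to prove that $\mathfrak{a}^{p},\mathfrak{b}^{q},\mathfrak{a}^{p}\mathfrak{b}^{q},\mathfrak{a}^{p}\bar{\mathfrak{b}}^{q}$ are all non-principal for every prime divisor $p,q$ of $n$. Suppose for contradiction that $\mathfrak{a}^{p}=(\gamma)$; raising to the $(n/p)$-th power gives $\gamma^{n/p}=u\eta$ with $\eta=(x_{1}+\sqrt{\Delta})/2$ and some $u\in\mathcal{O}_{K}^{\times}$. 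In the imaginary case one equated coefficients of $\sqrt{-d}$ to conclude $v=\pm 1$; this step fails here because $\mathcal{O}_{K}^{\times}\cong\{\pm 1\}\times\langle\varepsilon\rangle$ is infinite.

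The crucial use of $4\mid n$ is that $n/p$ is then even for every prime $p\mid n$, so $\gamma^{n/p}$ is a perfect square in $K$; setting $\delta:=\gamma^{n/(2p)}$ one has $\delta^{2}=u\eta$ and, taking absolute norms, $|N(\delta)|=y_{1}^{n/2}$. Writing $\delta=(U+V\sqrt{\Delta})/2$ with $U,V\in\mathbb{Z}$ of equal parity, the norm relation becomes $|U^{2}-V^{2}\Delta|=4y_{1}^{n/2}$. The case $V=0$ forces $\delta\in\mathbb{Q}$ and hence $u\eta\in\mathbb{Q}$, which is impossible since no rational multiple of $\eta^{-1}$ lies in $\mathcal{O}_{K}^{\times}$. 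Thus $|V|\ge 1$ and $\Delta\le U^{2}+4y_{1}^{n/2}$. After normalizing $\delta$ inside its unit-orbit so that $|U|\lesssim y_{1}^{n/4}$, combining with the hypothesis $\Delta\ge 16(abc^{2})^{2\ell}$ will yield a chain of inequalities of the form $y_{1}^{n/2}>(abc^{2})^{\ell n/p}>y_{1}^{\ell n/p}$, analogous to the one in the proof of Theorem \ref{thmi}, forcing $p>\ell$ and thereby contradicting $p\le\ell$. The doubled exponent $\ell\to 2\ell$ and the squared base $4\to 16$ in the hypothesis exactly reflect this descent from $\gamma^{n/p}$ down to $\delta=\gamma^{n/(2p)}$. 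The remaining cases $\mathfrak{b}^{q},\mathfrak{a}^{p}\mathfrak{b}^{q},\mathfrak{a}^{p}\bar{\mathfrak{b}}^{q}$ proceed along the same template applied to the appropriate mixed product-generators. The hardest part of the argument will be justifying the normalization step: one must select a generator of $\mathfrak{a}^{n/2}$, well-defined only modulo units, whose trace $U$ is provably small, without any direct handle on the size of the fundamental unit of $K$.
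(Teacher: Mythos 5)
Your setup is identical to the paper's: the same $x_j,y_j$, the same identity $x_j^2-4y_j^n=\Delta$, the same splitting into coprime conjugate ideals giving $\mathfrak{a}^n$ and $\mathfrak{b}^n$, and the same reduction to non-principality of $\mathfrak{a}^p,\mathfrak{b}^q,\mathfrak{a}^p\mathfrak{b}^q,\mathfrak{a}^p\bar{\mathfrak{b}}^q$. The divergence --- and the genuine gap --- is in how you try to rule out principality. Your plan hinges on choosing a generator $\delta=(U+V\sqrt{\Delta})/2$ of $\mathfrak{a}^{n/2}$ with $|U|\lesssim y_1^{n/4}$, and you rightly flag this as the hard point; in fact it is not merely hard but unworkable. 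First, $\delta$ is determined only up to $\pm\varepsilon^m$, and the best normalization one can force is of the shape $|U|\le 2\varepsilon^{1/2}y_1^{n/4}$, which is useless without an upper bound on the fundamental unit $\varepsilon$ --- and no such bound exists. Second, and more decisively, the relation $U^2-V^2\Delta=\pm 4y_1^{n/2}$ with $|V|\ge 1$ gives the \emph{lower} bound $U^2\ge \Delta-4y_1^{n/2}$, so whenever $\Delta$ is large compared with $y_1^{n/2}$ (which is exactly the regime the hypothesis $\Delta\ge 16(abc^2)^{2\ell}$ is meant to create) every admissible generator has $|U|$ at least of order $\sqrt{\Delta}$, and a generator with $|U|\lesssim y_1^{n/4}$ simply does not exist. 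The root cause is that the norm form of a real quadratic field is indefinite: unlike the imaginary case, where $\frac{u^2+dv^2}{4}\ge \frac{d}{4}$ for $v\ne 0$ gives the contradiction for free, here $\frac{U^2-V^2\Delta}{4}$ represents values as small as $\pm 1$, and no choice of generator changes the norm. So the inequality chain you hope to reproduce never gets off the ground.

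For comparison, the paper's own (admittedly very terse) outline never selects a generator at all. It stays with the norm equation $y_1^{pn}=\left(\frac{u^2-dv^2}{4}\right)^n$ and uses the hypothesis $4\mid n$ only to rewrite the right-hand side as $\left(\frac{(u^2-dv^2)^2}{16}\right)^{n/2}$, a manifestly nonnegative quantity, which it then compares with $(d/16)^{n/2}$ using $d\ge 16(abc^2)^{2\ell}$. That is the mechanism the strengthened hypotheses (the factor $16$ in place of $4$, the exponent $2\ell$ in place of $\ell$, and the condition $n\equiv 0\pmod 4$) are designed to feed. If you want to salvage your route, you would need a genuinely new input to control the unit contribution --- for instance, showing that principality of $\mathfrak{a}^p$ forces one of $\pm\eta$ or $\pm\varepsilon\eta$ to be a square in $\mathcal{O}_K$ and then excluding each case --- rather than a normalization of the generator, which the indefiniteness of the norm form rules out.
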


\begin{proof}
We need only outline the proof of Theorem \ref{thmr}, since in most aspects it is similar to the proof of Theorem \ref{thmi}. We first need to construct two integral ideals $\mathfrak{A}$ and $\mathfrak{B}$ such that 
\begin{equation}\label{rab}
\left(\frac{x_1+\sqrt{d}}{2}\right)=\mathfrak{A}^n,~~ \left(\frac{x_2+\sqrt{d}}{2}\right)=\mathfrak{B}^n.
\end{equation}
We then show that for any prime divisors $p$ and $q$ of $n$, $ \mathfrak{A}^p$, $\mathfrak{B}^q $, $\mathfrak{A}^p{\mathfrak{B}}^q$  and $\mathfrak{A}^p\bar{\mathfrak{B}}^q$ are not principal. Here $\bar{\mathfrak{B}}$ is the conjugate of $\mathfrak{B}$. The first two cases follow from the fact that there are no rational integers $u$ and  $v$ with same parity such that 
$y_1^{pn}=\left(\frac{u^2-dv^2}{4}\right)^{n}$. In fact, this follows from $\left(\frac{u^4-dv^2(dv^2-2u^2)}{16}\right)^{n/2}> \left(\frac{d}{16}\right)^{n/2} $ since $n/2$ is even, and then using the hypothesis $d\geq 16(abc^2)^{2\ell}$. 

Similarly, the last two cases follow from the fact that there are no rational integers $u$ and  $v$ with same parity such that 
$y_1^{pn}y_2^{qn}=\left(\frac{u^2-dv^2}{4}\right)^{n}$. This argument follows from the contradiction given by $\left(\frac{u^4-dv^2(dv^2-2u^2)}{16}\right)^{n/2}> \left(\frac{d}{16}\right)^{n/2} $ since $n/2$ is even, and then by the use of the hypothesis $d\geq 16(abc^2)^{2\ell}=16y_1^{2\ell} y_2^{2\ell}$. 
\end{proof}

\section*{acknowledgements}
The authors are grateful to Professor Claude Levesque for introducing them to \cite{LQ88, Q87} and for supplying the copies of the same. A. Hoque  is supported by SERB MARTICS grant (No. MTR/2021/000762) and CRG grant (No. CRG/2023/007323), Govt. of India and K. Srinivas acknowledges the grant `Advanced Research and Education in Mathematical Sciences at IMSc-II funded by DAE', Govt. of India. K. Srinivas thanks the organizers of the conference ICCGNFRT-2021 for the invitation where this work was presented and it is available at \url{<https://www.youtube.com/watch?v=z4Toa80AMik>}. The authors are grateful to anonymous referee for careful reading and helpful comments.

\end{document}